\newtheorem{thm}{Theorem}[section]
\newtheorem{cor}[thm]{Corollary}
\newtheorem{lem}[thm]{Lemma}
\theoremstyle{definition}
\newtheorem{defn}[thm]{Definition}
\theoremstyle{remark}
\newtheorem{rem}[thm]{Remark}
\newtheorem{ex}{Example}
\numberwithin{equation}{section}
\newcounter{tmp}
\begin{document}
	
	%
	%
	%
	%
	%
	%
	%
	%
	%

	\title[Twisted Derivations of Group Rings]
	{The Twisted Derivation Problem for Group Rings}

	\author[D. Chaudhuri]{Dishari Chaudhuri}
	
	\address{
		Department of Mathematical Sciences\\ Indian Institute of Science Education and Research Mohali\\
		Sector-81, Knowledge City, S.A.S. Nagar, Mohali-140306\\ Punjab, India}
	
	\email{dishari@iitg.ac.in, dishari.chaudhuri@gmail.com}
	
	\thanks{The author is thankful to IISER Mohali for providing fellowship when this project was carried out and the unknown referee whose valuable comments improved the article to a great extent.}
	
	\subjclass{Primary 16S34; Secondary 16W25}
	
	\keywords{Group Rings, Twisted Derivations.}
	
	
	\begin{abstract}We study $(\sigma,\tau)$-derivations of a group ring $RG$ where $G$ is a group with center having finite index in $G$ and $R$ is a semiprime ring with $1$ such that either $R$ has no torsion elements or that if $R$ has $p$-torsion elements, then $p$ does not divide the order of $G$ and let $\sigma,\tau$ be $R$-linear endomorphisms of $RG$
		fixing the center of $RG$ pointwise. We generalize Main Theorem $1.1$ of \cite{Chau-19} and prove that there is a ring $T\supset R$ such that $\mathcal{Z}(T)\supset\mathcal{Z}(R)$ and that for the natural extensions of $\sigma, \tau$ to $TG$ we get $H^1(TG,{}_\sigma TG_\tau)=0$, where ${}_\sigma TG_\tau$ is the twisted $TG-TG$-bimodule. We provide applications of the above result and Main Theorem $1.1$ of \cite{Chau-19} to integral group rings of finite groups and connect twisted derivations of integral group rings to other important problems in the field such as the Isomorphism Problem and the Zassenhaus Conjectures. We also give an example of a group $G$ which is both locally finite and nilpotent and such that for every field $F$, there exists an $F$-linear $\sigma$-derivation of $FG$ which is not $\sigma$-inner.
	
	\end{abstract}
	
	\maketitle


%

\maketitle

\section{Introduction}
Let $R$ be a commutative ring with $1$. Let $\mathcal{A}$ be an $R$-algebra and $N$ be an $\mathcal{A}-\mathcal{A}$-bimodule. Then an $R$-linear map $\gamma: \mathcal{A}\rightarrow N$ is called a derivation if $\gamma(ab)=\gamma(a)b+a\gamma(b)$ for all $a,b\in\mathcal{A}$.  A derivation is inner if there exists $x\in N$, such that $\gamma(a)=xa-ax$ for all $a\in\mathcal{A}$. Hence a derivation is a representative of an element in the degree $1$ Hoschild cohomology $H^1(\mathcal{A},N)$ and it is inner if it represents the zero element in $H^1(\mathcal{A},N)$. Let $\sigma$, $\tau$ be two different algebra endomorphisms on $\mathcal{A}$. Then a $(\sigma,\tau)$-derivation is an $R$-linear map $\delta: \mathcal{A}\rightarrow {}_\sigma\mathcal{A}_\tau$ satisfying $\delta(ab)\;=\; \delta(a)\tau(b)+\sigma(a)\delta(b)$ for $a,b\in \mathcal{A}$. So a $(\sigma,\tau)$-derivation is the special case of a derivation with
the twisted bimodule $_\sigma \mathcal{A}_\tau$ in the role of $N$. Here $_\sigma \mathcal{A}_\tau$ is  $\mathcal{A}$ as $R$-space and on it $a$ acts on the left as multiplication by $\sigma(a)$ and on the right by multiplication by $\tau(a)$. If there exists $x\in\mathcal{A}$ such that the $(\sigma,\tau)$-derivation $\delta_x:\mathcal{A}\rightarrow{}_\sigma\mathcal{A}_\tau$ is of the form $\delta_{x}(a)=x\tau(a)-\sigma(a)x$, then $\delta_x$ is called a $(\sigma,\tau)$-inner derivation of $\mathcal{A}$ induced by $x$. If $\sigma=\tau=id$, then $\delta$ and $\delta_x$ are respectively the usual derivation and inner derivation of $\mathcal{A}$ induced by $x$.\\

 Such twisted derivations were mentioned by Jacobson as $(s_1,s_2)$-derivations in \cite{J} (Chapter $7.7$) and were used in the study of generalization of Galois theory over division rings. Later on they have been more commonly referred to as $(\sigma,\tau)$-derivation or $(\alpha,\beta)$-derivation or $(\theta,\phi)$-derivation (e.g., Arga\c{c} et al. \cite{AKK-1987}, Bre\v{s}ar and Vukman \cite{BV-1991} to name just a few) and have been studied extensively for prime and semiprime rings (see \cite{AAH-06} for a survey on such results). They have also been used in the study of $q$-difference operators in number theory (\cite{Andre-2001}, \cite{vizio-2002}). In the last decade some major applications of $(\sigma,\tau)$-derivations were given by Hartwig et al. in their highly influential work \cite{HLS-06}. They found a way for the study of deformations of Witt algebra and constructed generalizations of Lie algebras known as hom-Lie algebras with the help of such derivations. Just as algebras of derivations become Lie algebras, hom-Lie algebras were constructed so that they appear as algebras of twisted derivations wth some added natural conditions. Since then the study of these kinds of twisted derivations have gained a new momentum as hom-Lie algebras form very interesting mathematical objects and are used in the study of deformations and discretizations of vector fields that have widespread applications to quantum physics, algebraic geometry and number theory. For an elaborate motivation and philosophy behind the construction of such structures with the help of twisted derivations one might refer to \cite{Lar-2017}.\par
 Twisted derivations were introduced and studied mostly in the commuative ring setting. The most common examples of $(\sigma,\tau)$-derivations are of the form $a(\tau-\sigma)$ for some suitable element $a$ in the ring or some extension of it. It can be shown in many commutative rings \emph{all} twisted derivations are of this form. Exclusive examples can be found in Tables $1$ of \cite{H-02} and \cite{ELMS-16}. We however are interested in twisted derivations of non-commutative rings, in particular, group rings. As we will see in many group rings also all twisted derivations can be of similar form. But we also provide an example when it is not of the above form. Ordinary derivations on group rings are well studied by Spiegel \cite{Sp-94} and Ferrero et al. \cite{FGM-95}. We will denote by $\mathcal{Z}(\mathcal{A})$ the center of the algebraic object (group or ring) $\mathcal{A}$. In an earlier work, we have proved the following:
 \begingroup
 \setcounter{tmp}{\value{thm}}
 \setcounter{thm}{0} 
 \renewcommand\thethm{\Alph{thm}}
  \begin{thm}[Main Theorem $1.1$, \cite{Chau-19}]\label{previous}
 	If $G$ is a finite group and if $R$ is an integral domain with $1$ such that either $R$ has no torsion elements or that if $R$ has $p$-torsion elements, then $p$ does not divide the order of $G$, then:
 	\begin{enumerate}
 		\item \label{part 1}if $R$ is a field and $\sigma$, $\tau$ are central $R$-algebra endomorphisms of $RG$, that is, $\sigma$ and $\tau$ fix $\mathcal{Z}(RG)$ elementwise, then $H^1(RG,{}_\sigma RG_\tau)=0$.
 		\item\label{part 2} if $R$ is an integral domain which is not a field and $\sigma,\;\tau$ are $R$-linear extensions of group homomorphisms of $G$ such that they fix $\mathcal{Z}(RG)$ elementwise, then $H^1(RG,{}_\sigma RG_\tau)=0$.
 	\end{enumerate}
 \end{thm}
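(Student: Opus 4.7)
The plan is to handle part (1) by reducing to a classical statement about central simple algebras via the Wedderburn decomposition, then to use part (1) to prove part (2) by descent from the field of fractions.

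For part (1), Maschke's theorem makes $RG$ semisimple and Wedderburn decomposes $RG\cong\prod_{i=1}^{s}A_{i}$ with each $A_{i}\cong M_{n_{i}}(D_{i})$ central simple over $F_{i}=\mathcal{Z}(D_{i})$. The primitive central idempotents $e_{i}$ lie in $\mathcal{Z}(RG)$, so the centrality hypothesis forces $\sigma,\tau$ to preserve each $A_{i}$ and to restrict to $F_{i}$-algebra endomorphisms fixing $F_{i}$ pointwise; any such endomorphism of a finite-dimensional simple algebra is an automorphism, and Skolem--Noether exhibits them as inner, producing units $u_{i},v_{i}\in A_{i}^{\times}$ with $\sigma(a)=u_{i}au_{i}^{-1}$ and $\tau(a)=v_{i}av_{i}^{-1}$ for $a\in A_{i}$. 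Splitting $\delta=\sum_{i}\delta_{i}$ componentwise and applying the twist $\widetilde{\delta}_{i}(a):=u_{i}^{-1}\delta_{i}(a)v_{i}$, a direct calculation converts $\delta_{i}$ into an ordinary derivation of $A_{i}$. Since $RG$ is $R$-separable (the element $|G|^{-1}\sum_{g}g\otimes g^{-1}$ is a separability idempotent), $H^{1}(A_{i},A_{i})=0$, so $\widetilde{\delta}_{i}(a)=y_{i}a-ay_{i}$ for some $y_{i}\in A_{i}$. Reversing the twist gives $\delta_{i}(a)=x_{i}\tau(a)-\sigma(a)x_{i}$ with $x_{i}:=u_{i}y_{i}v_{i}^{-1}$, and summing yields the required $x\in RG$.

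For part (2), let $K$ be the field of fractions of $R$. The torsion hypothesis is exactly that $|G|$ is invertible in $K$, and the $K$-linear extensions $\widetilde{\sigma},\widetilde{\tau}$ to $KG$ still fix $\mathcal{Z}(KG)=K\otimes_{R}\mathcal{Z}(RG)$ pointwise. The extension $\widetilde{\delta}\colon KG\to KG$ of $\delta$ is a twisted derivation, and part (1) produces some $x\in KG$ with $\widetilde{\delta}(a)=x\widetilde{\tau}(a)-\widetilde{\sigma}(a)x$ on $KG$.

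The main obstacle is descending $x$ to $RG$, and it is here that the group-homomorphism hypothesis becomes essential. Since $\sigma(g),\tau(g)\in G$ for every $g\in G$, writing $x=\sum_{h}x_{h}h$ with $x_{h}\in K$ and extracting the coefficient of $k\in G$ in $x\tau(g)-\sigma(g)x=\delta(g)$ gives
\[
x_{k\tau(g)^{-1}}-x_{\sigma(g)^{-1}k}=\delta(g)_{k}\in R.
\]
Thus the unknowns $\{x_{h}\}_{h\in G}$ satisfy a system of difference equations $x_{a}-x_{b}=r$ with $r\in R$, consistent over $K$ because $x$ is a solution. Viewing this as a weighted graph on $G$, the plan is to fix an anchor vertex to $0$ in each connected component and read off the remaining coordinates as signed path sums in $R$; consistency of the $K$-solution guarantees well-definedness, producing $x'\in RG$ with $\delta(a)=x'\tau(a)-\sigma(a)x'$ on $RG$ and hence $H^{1}(RG,{}_{\sigma}RG_{\tau})=0$.
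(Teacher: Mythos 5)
Your proposal is correct, and it is worth noting that the paper itself does not reprove Theorem \ref{previous} (it is imported from \cite{Chau-19}); the closest internal analogue is the proof of Theorem \ref{main} in Section \ref{Section 3}, which likewise splits the semisimple group algebra by its primitive central idempotents and then quotes Corollary \ref{csa-sigma-inner} for each block. Your part (1) follows essentially that route, except that you re-derive the central simple case via Skolem--Noether plus a twisting trick instead of citing Corollary \ref{csa-sigma-inner}; observe, though, that the separability fact you invoke already finishes part (1) on its own: since $|G|$ is invertible in $R$, the separability idempotent $|G|^{-1}\sum_g g\otimes g^{-1}$ makes $RG$ projective over its enveloping algebra, so $H^1(RG,M)=0$ for every $RG$-bimodule $M$ with symmetric $R$-action, and ${}_\sigma RG_\tau$ is such a bimodule because $\sigma,\tau$ are $R$-algebra maps --- the Wedderburn/Skolem--Noether reduction is therefore redundant (though harmless, and it matches the language of Corollary \ref{csa-sigma-inner}). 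For part (2) your route genuinely differs from the device the paper later uses over $\Bbb{Z}$ (the gcd-divisibility criterion of Theorem \ref{nec suf inner} and the remark following it, where the coefficients $c^g_{h^{-1}x}-b^g_{xh^{-1}}$ are $0$ or $1$): your observation that the group-homomorphism hypothesis turns every coefficient equation into a difference equation $x_a-x_b=r$ with $r\in R$, followed by the spanning-tree/path-sum construction anchored at one vertex per component, is a clean descent that works over an arbitrary integral domain, which is exactly what the statement requires; consistency of the $K$-solution does guarantee that cycle sums vanish, so the $R$-valued assignment satisfies all equations. Two small points to write out in a finished version: the verification that $\delta$ preserves each Wedderburn block (the idempotent computation carried out in Section \ref{Section 3}), and the degenerate ``loop'' equations with $a=b$, which force the corresponding right-hand sides to be $0$ and are then satisfied by any assignment.
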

\endgroup
   Now the definition of a $(\sigma,\tau)$-derivation is given for only one pair of endomorphisms $\sigma$, $\tau$ at a time and different endomorphisms might give different properties for such derivations. Thus it is essential to study $(\sigma,\tau)$-derivations of an algebra for different endomorphisms on it. Also $\sigma$ and $\tau$ are taken to be $R$-linear extensions of group endomorphisms of $G$ in Theorem \ref{previous}, part \ref{part 2}. Due to such strong assumptions on $\sigma$ and $\tau$ many interesting cases like changes of group bases inside the group ring, e.g., by conjugation with units, are excluded. Also $G$ was assumed to be finite in Theorem \ref{previous}. In this present work, we take into account most of such cases and examine the properties in much more generality as we take $R$ to be a semiprime ring with $1$, $G$ a torsion group with its center having finite index in $G$ and $\sigma$, $\tau$ to be $R$-linear central endomorphisms of $RG$, that is, they fix the center of $RG$ elementwise. So here $\sigma(G),\tau(G)\subseteq RG$. Then we apply our results to integral group rings of finite groups and connect twisted derivations of group rings to other important problems of the field such as the Isomorphism Problem or the Zassenhaus Conjectures. 
 
  Recall that a ring $\mathcal{R}$ is said to be of $characteristic\;0$ if $\mathcal{R}$ does not have torsion elements. Otherwise there exists a set of prime integers $p$ for which there is a $p$-torsion element.  We now state our main result:

\setcounter{thm}{\thetmp}
\begin{thm}\label{main}
	
	Let $G$ be a torsion group whose center has finite index. Let $R$ be semiprime with $1$
	such that either $R$ does not have torsion elements or that if $R$ has $p$-torsion elements, then $p$ does not divide the order of $G$ and let $\sigma,\tau$ be $R$-linear endomorphisms of $RG$
	fixing the center of $RG$ pointwise. Then there is a ring $T$ containing $R$ such that the center of $T$ contains the center of $R$ and such that for the natural extensions of $\sigma, \tau$ to $TG$ we get $H^1(TG,{}_\sigma TG_\tau)=0$.
	
\end{thm}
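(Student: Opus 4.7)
My plan is to combine a structure theorem for groups with finite-index center, a reduction to finite subgroups via local finiteness, and the already-established finite-group case Theorem~\ref{previous}. By Schur's theorem, $[G:\mathcal{Z}(G)]=n<\infty$ implies that the commutator subgroup $G'$ is finite. Since $\mathcal{Z}(G)$ is abelian torsion (hence locally finite) and has finite index in $G$, the group $G$ is itself locally finite. Consequently every conjugacy class of $G$ is finite, so $\mathcal{Z}(RG)$ is spanned over $\mathcal{Z}(R)$ by finite class sums, and any finite subset of $G$ lies inside a finite subgroup $H\le G$ that also contains $G'$ and a fixed transversal $g_1,\dots,g_n$ for $\mathcal{Z}(G)$.

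For the construction of $T$, I would pass from the semiprime ring $R$ to a suitable ring of quotients -- the Martindale symmetric ring of quotients or, when available, the central closure -- to obtain a semiprime ring $T\supseteq R$ with $\mathcal{Z}(T)\supseteq \mathcal{Z}(R)$, in which the integers arising from $n$ and the exponent of $G'$ become invertible; the torsion hypothesis on $R$ ensures that this is possible. The natural extensions of $\sigma,\tau$ to $TG$ remain $T$-linear and fix $\mathcal{Z}(TG)$. Fix a $(\sigma,\tau)$-derivation $\delta$ of $TG$. By the twisted Leibniz rule and $R$-linearity, $\delta$ is determined by its values on the transversal $g_1,\dots,g_n$ together with its restriction to $\mathcal{Z}(G)$; the latter is forced to be essentially a derivation of a commutative ring because $\sigma,\tau$ fix the center. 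Choose a finite subgroup $H\le G$ containing $g_1,\dots,g_n$, $G'$, and the supports of all $\sigma(g_i)$, $\tau(g_i)$, $\delta(g_i)$; then $\delta|_{TH}$ is a $(\sigma,\tau)$-derivation of the finite group ring $TH$.

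Theorem~\ref{previous} then applies after passing from $T$ to its prime (integral-domain) factors: on each such factor one obtains an inner element implementing $\delta|_{TH}$, and the subdirect decomposition of $T$ reassembles these into a candidate $x\in TG$. The natural formula is a central average of the form $x=\frac{1}{n}\sum_{i=1}^{n}\sigma(g_i^{-1})\,\delta(g_i)$, which is well-defined precisely because $\sigma,\tau$ fix $\mathcal{Z}(RG)$ and because $n$ is invertible in $T$. A direct computation should verify $\delta(g)=x\tau(g)-\sigma(g)x$ first for $g$ in the transversal and then, by the twisted Leibniz rule applied to products $zg_i$ with $z\in\mathcal{Z}(G)$, for every $g\in G$; by $T$-linearity this extends to all of $TG$.

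The hardest part will be the interplay of the three enlargements. One must choose $T$ so that it is semiprime, contains $R$ centrally, and inverts the needed integers, while still allowing the natural extensions of $\sigma,\tau$ to fix $\mathcal{Z}(TG)$. A second delicate point is the coherence of the inner elements across a tower of finite subgroups exhausting $G$: one must check that the produced $x$ does not depend on the choice of $H$ or transversal, so that a single global element of $TG$ implements $\delta$. Finally, since Theorem~\ref{previous}(\ref{part 2}) requires $\sigma,\tau$ to be $R$-linear extensions of group endomorphisms, whereas we only assume central-fixing $R$-linear endomorphisms, one is forced to reduce instead to Theorem~\ref{previous}(\ref{part 1}) via the field factors of $T$, which in turn constrains how small $T$ can be kept and is the principal tension in the argument.
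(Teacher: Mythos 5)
The central structural step of your plan fails. You propose to apply Theorem~\ref{previous} ``after passing from $T$ to its prime (integral-domain) factors,'' but a semiprime ring is only a subdirect product of prime rings: these factors are in general neither integral domains nor fields, there may be infinitely many of them, and on them neither part of Theorem~\ref{previous} is applicable (part~\ref{part 1} needs a field, part~\ref{part 2} needs $\sigma,\tau$ to come from group homomorphisms, while you only have central-fixing endomorphisms). Moreover, inner elements produced separately in each factor need not reassemble into an element of the subdirect product, let alone of $TG$. The claim that the Martindale symmetric quotient or central closure of $R$ makes $n=[G:\mathcal{Z}(G)]$ and the exponent of $G'$ invertible is asserted, not proved, and it is precisely the kind of point that requires an argument. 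The paper sidesteps all of this concretely: since $\sigma,\tau$ fix $\mathcal{Z}(RG)$, one has $\delta(G),\sigma(G),\tau(G)\subseteq ZG$ with $Z=\mathcal{Z}(R)$; the subring $R'\subseteq Z$ generated by the finitely many coefficients that occur is reduced, commutative and Noetherian, hence embeds through its finitely many minimal primes into a \emph{finite} direct sum $T'$ of algebraically closed fields; after extending $\delta$ via Lemma~\ref{extension}, Maschke plus Wedderburn reduce the finite-group case to Corollary~\ref{csa-sigma-inner} on each simple component, and one takes $T=T'\otimes_Z R$.

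Your averaging formula over a transversal is the germ of a genuinely different (and potentially simpler) argument, but as written it has gaps. The reindexing that makes $x=-\frac{1}{n}\sum_{i}\sigma(g_i)^{-1}\delta(g_i)$ implement $\delta$ requires that $\sigma(g_i)^{-1}\delta(g_i)$ depend only on the coset $\mathcal{Z}(G)g_i$, i.e.\ it requires $\delta(\mathcal{Z}(G))=0$; this is exactly what the paper proves at the start of its general case from Lemma~\ref{useful lemma}(\ref{induction}) and the torsion hypotheses, and your remark that the restriction to the center is ``essentially a derivation of a commutative ring'' does not deliver it. You also never carry out the verification (note the sign: with your $x$ one gets $\delta(g)=\sigma(g)x-x\tau(g)$), nor do you justify that $n$ can be inverted in a ring $T\supseteq R$ with $\mathcal{Z}(T)\supseteq\mathcal{Z}(R)$ -- the clean route is that the torsion hypothesis makes $R\rightarrow R\otimes_{\Bbb Z}\Bbb{Z}[1/n]$ injective, which itself needs the primes dividing $n$ to be non-torsion primes of $R$. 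Finally, the ``coherence across a tower of finite subgroups'' you flag as delicate is a symptom of a missing idea: no tower is needed. The paper uses a single finite subgroup $H$, generated by a transversal of $\mathcal{Z}(G)$ together with the finitely many central elements occurring in the $\delta(g_i)$ and in the products $g_ig_j$, applies the finite case to $RH$ to get one $\alpha\in TH$, and then extends innerness to all of $G$ by writing $g=zg_i$ and using $\delta(z)=0$, so no compatibility check between different finite subgroups ever arises.
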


Our manuscript has been divided into four sections. Section \ref{Section 2} contains some known results on $(\sigma,\tau)$-derivations to be used later and a brief overview of the important problems on integral group rings. Section \ref{Section 3} is devoted to the proof of Theorem \ref{main}. In section \ref{Section 4} we apply Theorem \ref{main} and Theorem \ref{previous} to integral group rings of finite groups (Theorems \ref{nec suf inner}, \ref{application}) and also comment on twisted derivations of commutative group algebras (Theorem \ref{commutative group algebra}). We also provide a counter example (\ref{Counter Example}) to show that Theorem \ref{main} may not hold in general.




\section{Useful Results}\label{Section 2}

 Let $R$ be a commutative ring with $1$ and $\mathcal{A}$ be an algebra over $R$. Let $\sigma$, $\tau$ be two different $R$-algebra endomorphisms on $\mathcal{A}$. The set of all $R$-linear $(\sigma,\tau)$-derivations on $\mathcal{A}$ will be denoted by $\mathfrak{D}_{(\sigma,\tau)}(\mathcal{A})$. The following properties of $(
 \sigma,\tau)$-derivations can be checked very easily and can also be found in Section $2$ of \cite{Chau-19}. All such derivations are assumed to be $R$-linear in the following.

\begin{lem}\label{useful lemma}The following properties are satisfied by $(\sigma,\tau)$-derivations on $\mathcal{A}.$ \begin{enumerate}
		\item If $\mathcal{A}$ is unital, then for any $(\sigma,\tau)$-derivation $\delta$, $\delta(1)=0$.
		\item $\mathfrak{D}_{(\sigma,\tau)}(\mathcal{A})$ is an $R$-module as $\delta_1+\delta_2,\;r\delta_1\in\mathfrak{D}_{(\sigma,\tau)}(\mathcal{A})$ for $\delta_1,\delta_2\in\mathfrak{D}_{(\sigma,\tau)}(\mathcal{A})$ and $r\in R$.
		\item When $\sigma(x)a=a\sigma(x)$ $(\text{or } \tau(x)a=a\tau(x))$ for all $x,a\in\mathcal{A}$, and in particular when $\mathcal{A}$ is commutative, $\mathfrak{D}_{(\sigma,\tau)}(\mathcal{A})$ carries a natural left (or right) $\mathcal{A}$-module structure by $(a,\delta)\longmapsto a.\delta:\;x\mapsto a\delta(x)$.
		\item \label{inner sum}For $x,y\in\mathcal{A}$, the $(\sigma,\tau)$-inner derivations satisfy: $\delta_{x+y}=\delta_x+\delta_y$.
		\item \label{sigma inner commutator}For $(\sigma,\tau)$-inner derivations $\delta_x, \delta_y$ for some $x,y\in\mathcal{A}$, $\delta_x=\delta_y$ if and only if $(x-y)\tau(a)=\sigma(a)(x-y)$ for all $a\in\mathcal{A}$.
		\item \label{induction}
		Let $\sigma$ and $\tau$ be algebra homomorphisms on $\mathcal{A}$ that fix $\mathcal{Z}(\mathcal{A})$ elementwise. Then for a $(\sigma,\tau)$- derivation $\delta$ on $\mathcal{A}$, we have $\delta\left(\alpha^n\right)=n\alpha^{n-1}\delta(\alpha)$ for all $\alpha\in\mathcal{Z}(\mathcal{A})$.
	\end{enumerate}
\end{lem}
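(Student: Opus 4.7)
The plan is to verify each of the six items by direct calculation straight from the defining identity $\delta(ab) = \delta(a)\tau(b) + \sigma(a)\delta(b)$ and the definition of the inner derivation $\delta_x(a) = x\tau(a) - \sigma(a)x$; each part is a short manipulation, and no deep machinery is required. I would present them in the order stated, since later parts mildly depend on earlier ones.

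For (1), apply the Leibniz rule to $1 = 1 \cdot 1$: since $\sigma$ and $\tau$ are algebra endomorphisms, $\sigma(1) = \tau(1) = 1$, so $\delta(1) = \delta(1) + \delta(1)$, forcing $\delta(1) = 0$. For (2), $R$-linearity of the sum $\delta_1 + \delta_2$ and scalar multiple $r\delta_1$ is immediate because $\sigma$ and $\tau$ are themselves $R$-linear and the Leibniz identity is linear in $\delta$. For (3), the candidate $(a.\delta)(xy) = a\delta(x)\tau(y) + a\sigma(x)\delta(y)$ equals $(a.\delta)(x)\tau(y) + \sigma(x)(a.\delta)(y)$ exactly when $a\sigma(x) = \sigma(x)a$, which is the hypothesis; the right-module analog works symmetrically with $\tau$.

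For (4), write $\delta_{x+y}(a) = (x+y)\tau(a) - \sigma(a)(x+y)$ and distribute. For (5), note $\delta_x - \delta_y = \delta_{x-y}$ by (4), so $\delta_x = \delta_y$ iff $\delta_{x-y}$ is identically zero, i.e.\ iff $(x-y)\tau(a) = \sigma(a)(x-y)$ for every $a \in \mathcal{A}$.

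For (6), I would argue by induction on $n$, the case $n=1$ being trivial. Assuming $\delta(\alpha^n) = n\alpha^{n-1}\delta(\alpha)$, compute
\[
\delta(\alpha^{n+1}) = \delta(\alpha^n \cdot \alpha) = \delta(\alpha^n)\tau(\alpha) + \sigma(\alpha^n)\delta(\alpha).
\]
Because $\alpha \in \mathcal{Z}(\mathcal{A})$ is fixed by both $\sigma$ and $\tau$, this becomes $n\alpha^{n-1}\delta(\alpha)\,\alpha + \alpha^n \delta(\alpha)$, and centrality of $\alpha$ lets one commute $\alpha$ past $\delta(\alpha)$ to obtain $(n+1)\alpha^n\delta(\alpha)$. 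The only subtle step is this last commutation, which is the one place the hypothesis $\alpha \in \mathcal{Z}(\mathcal{A})$ (rather than merely $\sigma(\alpha)=\tau(\alpha)=\alpha$) is genuinely used; all other parts are formal. Since every item reduces to a one- or two-line calculation, there is no real obstacle — the point of the lemma is simply to record these facts for repeated use later.
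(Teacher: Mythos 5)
Your verification is correct and is exactly the routine direct computation the paper has in mind — the paper itself omits the proof, stating only that these properties "can be checked very easily" and citing Section 2 of \cite{Chau-19}. All six calculations check out, and your remark in part (6) that the centrality of $\alpha$ (not merely $\sigma(\alpha)=\tau(\alpha)=\alpha$) is needed to commute $\alpha$ past $\delta(\alpha)$ is an accurate identification of the only non-formal step.
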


The following is Corollary $2.9$ of \cite{Chau-19}.
\begin{cor}\label{csa-sigma-inner} Let $\mathcal{A}$ be a finite dimensional central simple algebra with $1$ over a field $F$. Let $\sigma$ and $\tau$ be non-zero $F$-algebra endomorphisms of $\mathcal{A}$. Then $H^1(\mathcal{A},{}_\sigma\mathcal{A}_\tau)=0$.
\end{cor}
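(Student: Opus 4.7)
The plan is to reduce the twisted statement to the classical fact that every $F$-linear derivation of a finite-dimensional central simple $F$-algebra is inner. First, I would observe that since $\mathcal{A}$ is simple and $\sigma,\tau$ are non-zero $F$-algebra homomorphisms sending $1$ to $1$, their kernels are proper two-sided ideals of $\mathcal{A}$ and therefore zero; thus both maps are injective, and finite-dimensionality promotes them to $F$-algebra automorphisms of $\mathcal{A}$.

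Next I would invoke the Skolem--Noether theorem, which says that every $F$-algebra automorphism of a finite-dimensional central simple $F$-algebra is inner. So there exist units $u,v\in\mathcal{A}^{\times}$ with $\sigma(a)=uau^{-1}$ and $\tau(a)=vav^{-1}$ for all $a\in\mathcal{A}$. At this point the twisted bimodule ${}_{\sigma}\mathcal{A}_{\tau}$ becomes isomorphic to the regular bimodule $\mathcal{A}$ via the conjugation by $(u,v)$, which is the structural reason the cohomology should vanish.

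Given a $(\sigma,\tau)$-derivation $\delta$, I would then define $D\colon\mathcal{A}\to\mathcal{A}$ by $D(a)=u^{-1}\delta(a)v$ and verify, by direct substitution using $\sigma(a)=uau^{-1}$ and $\tau(b)=vbv^{-1}$, that the twisted Leibniz identity for $\delta$ collapses to the ordinary Leibniz rule $D(ab)=D(a)b+aD(b)$. Since $\mathcal{A}$ is central simple (hence separable) over $F$, every such ordinary $F$-linear derivation is inner, so $D(a)=ya-ay$ for some $y\in\mathcal{A}$. Setting $x=uyv^{-1}$ then yields $\delta(a)=u(ya-ay)v^{-1}=x\tau(a)-\sigma(a)x$, proving that $\delta$ is $(\sigma,\tau)$-inner, and hence $H^{1}(\mathcal{A},{}_{\sigma}\mathcal{A}_{\tau})=0$.

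The main obstacle I anticipate is purely a bookkeeping one, namely choosing the conjugation that kills both endomorphisms simultaneously; the form $u^{-1}(\cdot)v$ is essentially forced once one notices that only the \emph{left} factor $u^{-1}$ interacts with $\sigma$ and only the \emph{right} factor $v$ interacts with $\tau$. The two substantive classical inputs, Skolem--Noether and the inner-ness of ordinary derivations on central simple algebras, carry the real weight of the argument; everything else is the explicit change of variables above.
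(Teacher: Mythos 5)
Your argument is correct: simplicity plus finite dimensionality forces $\sigma$ and $\tau$ to be automorphisms, Skolem--Noether makes them inner (say $\sigma=u(\cdot)u^{-1}$, $\tau=v(\cdot)v^{-1}$), and the change of variables $D(a)=u^{-1}\delta(a)v$, $x=uyv^{-1}$ does untwist the Leibniz rule and transport the classical innerness of ordinary derivations on a central simple algebra back to $(\sigma,\tau)$-innerness, which is exactly the vanishing of $H^1(\mathcal{A},{}_\sigma\mathcal{A}_\tau)$. Note that the present paper states this corollary without proof, citing Corollary $2.9$ of the author's earlier work, and your route is the standard one expected there, so there is nothing substantive to add.
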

The following is Theorem $4$ of \cite{HLS-06}.
\begin{thm}\label{ufd thm}If $\sigma$ and $\tau$ are two different algebra endomorphisms on a unique factorization domain $\mathcal{A}$, then $\mathcal{D}_{(\sigma,\tau)}(\mathcal{A})$ is free of rank one as an $\mathcal{A}$-module with generator $$\Delta:=\frac{\tau-\sigma}{g}:\;x\longmapsto\frac{(\tau-\sigma)(x)}{g},\text{ where }g=\;gcd\;((\tau-\sigma)(\mathcal{A})).$$
\end{thm}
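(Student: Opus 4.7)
The plan is threefold: construct $\Delta$ and verify it is a $(\sigma,\tau)$-derivation; extract a symmetry identity imposed on every $(\sigma,\tau)$-derivation by the commutativity of $\mathcal{A}$; then use the UFD structure to show this identity forces every derivation to be an $\mathcal{A}$-multiple of $\Delta$.

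First I would check that $g=\gcd((\tau-\sigma)(\mathcal{A}))$ is a well-defined element of $\mathcal{A}$. Since $\sigma\neq\tau$, there is some $y_{0}\in\mathcal{A}$ with $(\tau-\sigma)(y_{0})\neq 0$. In a UFD, for each prime $p$ set $m_{p}:=\inf_{x\in\mathcal{A}}v_{p}((\tau-\sigma)(x))$: this is bounded above by $v_{p}((\tau-\sigma)(y_{0}))<\infty$, and is nonzero only for those primes dividing $(\tau-\sigma)(y_{0})$, of which there are finitely many. Hence $g:=\prod_{p}p^{m_{p}}$ is a well-defined element of $\mathcal{A}$ up to a unit, and by construction $g\mid(\tau-\sigma)(x)$ for every $x$. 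A direct computation using multiplicativity of $\sigma,\tau$ gives
\[
(\tau-\sigma)(xy)=(\tau-\sigma)(x)\tau(y)+\sigma(x)(\tau-\sigma)(y),
\]
so $\tau-\sigma$ is itself a $(\sigma,\tau)$-derivation; dividing pointwise by the constant scalar $g\in\mathcal{A}$ therefore yields $\Delta\in\mathfrak{D}_{(\sigma,\tau)}(\mathcal{A})$ with values in $\mathcal{A}$.

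Next, for an arbitrary $\delta\in\mathfrak{D}_{(\sigma,\tau)}(\mathcal{A})$, I would exploit the equation $\delta(xy)=\delta(yx)$, which holds since $\mathcal{A}$ is commutative. Expanding both sides by the $(\sigma,\tau)$-Leibniz rule and using that all elements of $\mathcal{A}$ commute (so $\sigma(y)\delta(x)=\delta(x)\sigma(y)$ and $\sigma(x)\delta(y)=\delta(y)\sigma(x)$) yields the key identity
\[
\delta(x)\,(\tau-\sigma)(y)\;=\;\delta(y)\,(\tau-\sigma)(x)\qquad\text{for all } x,y\in\mathcal{A}.
\]
Fixing $y_{0}$ with $(\tau-\sigma)(y_{0})\neq 0$, this says that in the field of fractions $K$ of $\mathcal{A}$ we have $\delta(x)=c\,(\tau-\sigma)(x)$ for all $x$, with $c:=\delta(y_{0})/(\tau-\sigma)(y_{0})\in K$ independent of $x$.

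To conclude, write $c=u/v$ in lowest terms with coprime $u,v\in\mathcal{A}$; the requirement $\delta(x)\in\mathcal{A}$ forces $v\mid(\tau-\sigma)(x)$ for every $x$, so $v$ is a common divisor of $(\tau-\sigma)(\mathcal{A})$ and hence $v\mid g$. Setting $a:=u(g/v)\in\mathcal{A}$ then gives $\delta=a\Delta$, proving generation. Freeness is immediate: if $a\Delta=0$ then $a(\tau-\sigma)(y_{0})=0$, and $\mathcal{A}$ being a domain forces $a=0$. The principal obstacle is the well-definedness of $g$ for the possibly infinite image $(\tau-\sigma)(\mathcal{A})$, which is the one place where the UFD hypothesis is essential (beyond being a domain); once $g$ is in hand and the symmetry identity is established, everything else is formal manipulation of fractions against coprimality.
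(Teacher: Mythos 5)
The paper does not prove this statement; it imports it verbatim as Theorem~4 of the cited Hartwig--Larsson--Silvestrov paper. Your argument is correct and complete, and it is essentially the standard proof from that source: the identity $\delta(x)(\tau-\sigma)(y)=\delta(y)(\tau-\sigma)(x)$ extracted from $\delta(xy)=\delta(yx)$ is exactly the key lemma there, and your treatment of the well-definedness of $g=\gcd((\tau-\sigma)(\mathcal{A}))$ via the finitely many primes dividing a fixed nonzero value, together with the coprimality argument forcing $v\mid g$, fills in the UFD-specific details correctly.
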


We now prove a useful result to be used in the proof of our main theorem in Section $3$. 
\begin{lem} \label{extension}Let $R$ and $T$ be rings with the same identity such that $R\subset T$ and $\mathcal{Z}(R)\subset\mathcal{Z}(T)$ and $G$ be a group. Let $\sigma$ and $\tau$ be central $R$-endomorphisms of $RG$ and $\delta$ be an $R$-linear $(\sigma,\tau)$-derivation of $RG$. Then $\delta$ can be extended to a $T$-linear $(\sigma_T,\tau_T)$-derivation $\delta_T$ of $TG$, where $\sigma_T, \tau_T$ are natural $T$-linear extensions of $\sigma$ and $\tau$ to $TG$ that fix $\mathcal{Z}(TG)$ elementwise. 
\end{lem}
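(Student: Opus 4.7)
The plan is to extend everything $T$-linearly from the group basis $G$. Since every element of $TG$ has a unique expression $\sum_{g\in G}t_g g$ with $t_g\in T$ almost all zero, I would set
\[
\sigma_T\Bigl(\sum_g t_g g\Bigr):=\sum_g t_g\,\sigma(g),\quad \tau_T\Bigl(\sum_g t_g g\Bigr):=\sum_g t_g\,\tau(g),\quad \delta_T\Bigl(\sum_g t_g g\Bigr):=\sum_g t_g\,\delta(g),
\]
noting that $\sigma(g),\tau(g),\delta(g)\in RG\subset TG$, so these are well-defined $T$-linear maps $TG\to TG$ that restrict to $\sigma,\tau,\delta$ on $RG$. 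The hypothesis $\mathcal{Z}(R)\subseteq\mathcal{Z}(T)$, together with commutativity of $R$ that is implicit in talking about $R$-linear endomorphisms of $RG$, places $R$ inside $\mathcal{Z}(T)$. Consequently every $R$-coefficient appearing in an element of $RG$ commutes with every element of $T$ inside $TG$; this is the scalar-sliding I would exploit throughout.

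Using this commutation I would first check that $\sigma_T$ and $\tau_T$ are ring endomorphisms: on basis pairs $\sigma_T(gh)=\sigma(gh)=\sigma(g)\sigma(h)=\sigma_T(g)\sigma_T(h)$, and the identity lifts to all of $TG$ because $T$-scalars slide freely through the $RG$-valued outputs. The step I expect to be the main obstacle is showing that $\sigma_T$ and $\tau_T$ fix $\mathcal{Z}(TG)$ pointwise. Here my key observation is that $z=\sum_g t_g g\in\mathcal{Z}(TG)$ forces $t_g\in\mathcal{Z}(T)$ and $t_g=t_{hgh^{-1}}$ for every $h\in G$; since the (finite) support of $z$ is a union of conjugacy classes, those classes are themselves finite, so $z=\sum_{i=1}^{m}c_i\widehat{C_i}$ with $c_i\in\mathcal{Z}(T)$ and each class sum $\widehat{C_i}$ already lying in $\mathcal{Z}(RG)$. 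Because $\sigma$ is a central $R$-endomorphism it fixes each $\widehat{C_i}$, so $T$-linearity gives $\sigma_T(z)=\sum_i c_i\widehat{C_i}=z$, and the same argument applies to $\tau_T$.

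To finish, I would verify that $\delta_T$ is a $(\sigma_T,\tau_T)$-derivation, checking the Leibniz identity on products of basis elements with the given identity for $\delta$ and then extending to arbitrary $a=\sum t_g g$ and $b=\sum s_h h$ in $TG$ via
\[
\delta_T(ab)=\sum_{g,h}t_g s_h\bigl(\delta(g)\tau(h)+\sigma(g)\delta(h)\bigr)=\delta_T(a)\tau_T(b)+\sigma_T(a)\delta_T(b),
\]
where the interchange of each $s_h$ with $\delta(g)$, $\sigma(g)$, and $\tau(g)$ is legitimate because their $R$-coefficients lie in $\mathcal{Z}(T)$. $T$-linearity of $\delta_T$ is built into the definition, and $R$-linearity of $\delta$ ensures $\delta_T$ restricts to $\delta$ on $RG$, completing the extension.
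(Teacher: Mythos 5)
Your construction is, in substance, the same as the paper's: the paper writes the extensions as $1\otimes\sigma$, $1\otimes\tau$, $1\otimes\delta$ on $T\otimes_Z ZG$ (with $Z=\mathcal{Z}(R)$), which is exactly your $T$-linear extension from the group basis, and your class-sum description of $\mathcal{Z}(TG)$ is a more explicit version of its decomposition of central elements into $\mathcal{Z}(T)$-coefficients times central elements of $ZG$. The one genuine gap is your justification of the scalar-sliding: you assert that commutativity of $R$ is implicit in the statement and conclude $R\subseteq\mathcal{Z}(R)\subseteq\mathcal{Z}(T)$. That assumption is not available here. The lemma is stated for arbitrary rings with $R\subset T$ and $\mathcal{Z}(R)\subset\mathcal{Z}(T)$, and it has to serve the setting of Theorem \ref{main}, where $R$ is only semiprime and possibly noncommutative (indeed, if $R$ were assumed commutative the hypothesis would simply read $R\subset\mathcal{Z}(T)$). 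For noncommutative $R$ your key step ``every $R$-coefficient commutes with every element of $T$'' fails, and with it your verifications that $\sigma_T,\tau_T$ are multiplicative and that $\delta_T$ satisfies the Leibniz rule.

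The repair is precisely the first observation in the paper's proof, and it is the idea your write-up is missing: the coefficients that actually occur in $\sigma(g),\tau(g),\delta(g)$ are automatically central in $R$. Indeed, for $r\in R$ and $g\in G$ one has $rg=gr$ in $RG$, so $r\delta(g)=\delta(rg)=\delta(gr)=\delta(g)\tau(r)+\sigma(g)\delta(r)=\delta(g)r$, using $\tau(r)=r\tau(1)=r$ and $\delta(r)=r\delta(1)=0$; hence $\delta(g)\in\mathcal{Z}(R)G$, and the same computation gives $\sigma(g),\tau(g)\in\mathcal{Z}(R)G$. With this, the hypothesis $\mathcal{Z}(R)\subset\mathcal{Z}(T)$ supplies exactly the commutations you invoke (sliding $T$-scalars past $\sigma(g)$, $\tau(g)$, $\delta(g)$, and the fact that each class sum $\widehat{C_i}$, having coefficients $1$, lies in $\mathcal{Z}(RG)$ and is therefore fixed by $\sigma$ and $\tau$), and the rest of your argument goes through unchanged. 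Note also that this centrality-of-coefficients observation is reused separately in the proof of Theorem \ref{main} for semiprime $R$, so it is not a dispensable technicality but the point of the first paragraph of the paper's proof.
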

\begin{proof}
We first observe that if $Z=\mathcal{Z}({R})$ is the center of $R$, then given $r\in R$, $g\in G$, we have $rg=gr$. So $\delta(rg)=\delta(gr)$, that is, $r\delta(g)=\delta(g)r$. Hence $\delta(g)\in ZG$. In the same way, we can verify that $\sigma(g),\tau(g)\in ZG$ for all $g\in G.$ Thus, $\delta, \sigma$ and $\tau$ restricted to $ZG$ induces a $Z$-linear $(\sigma,\tau)$-derivation of $ZG$. 
 Now identifying $TG$ with $T\otimes_ZZG$, $\sigma$ and $\tau$ can be extended in a natural way to $T$-linear endomorhisms $\sigma_T$ and $\tau_T$ of $TG$ by defining $\sigma_T=1\otimes\sigma$ and $\tau_T=1\otimes\tau$ respectively. Clearly, $\sigma_T$ and $\tau_T$ will fix $\mathcal{Z}(TG)$ elementwise. In fact, if $\alpha\in\mathcal{Z}(TG)$, then $\alpha=\sum_{i}t_i\otimes\alpha_i$, where $t_i\in \mathcal{Z}(T)$ and $\alpha_i\in \mathcal{Z}(ZG).$ Then, as $\sigma$ fixes $\mathcal{Z}(ZG)$ elementwise,  $\sigma_T(\alpha)=\sum_{i}t_i\otimes\sigma(\alpha_i)=\sum_it_i\otimes\alpha_i=\alpha.$ Similarly, $\tau_T(\alpha)=\alpha.$ That is, $\sigma_T$ and $\tau_T$ fix $\mathcal{Z}(TG)$ elementwise. Finally, we define $\delta_T:T\otimes_ZZG\longrightarrow T\otimes_ZZG$ by $\delta_T=1\otimes\delta$. For $g,h\in G$, we have
 \begin{eqnarray*}
	\delta_T(gh)
	&=&1\otimes\delta(gh)=1\otimes\big(\delta(g)\tau(h)+\sigma(g)\delta(h)\big)\\
	&=&\big(1\otimes\delta(g)\big)\big(1\otimes\tau(h)\big)+\big(1\otimes\sigma(g)\big)\big(1\otimes\delta(h)\big)\\
	&=&\delta_T(g)\tau_T(h)+\sigma_T(g)\delta_T(h).
\end{eqnarray*}
Since $\delta$ is $R$-linear, $\delta_T$ is $T$-linear. Hence by bilinearity, it follows that $\delta_T$ is a $T$-linear $(\sigma_T,\tau_T)$-derivation of $TG.$
\end{proof}

 Now we give a brief overview of some of the main problems in the field of integral group rings. We will denote by $\mathcal{U}(R)$ the unit group of a ring $R$. Recall that the subgroup of normalized units of the unit group of an integral group ring $\Bbb{Z}G$, that is, units of $\Bbb{Z}G$ having augmentation $1$, is denoted by $V(\Bbb{Z}G)$. We say two elements $a$ and $b$ in $\Bbb{Z}G$ are rationally conjugate if there exists $u\in\mathcal{U}(\Bbb{Q}G)$ such that $u^{-1}au=b$. Also recall the following definition:
 \begin{defn}
 	For two finite groups $G$ and $H$ an isomorphism $\phi:\Bbb{Z}G\longrightarrow\Bbb{Z}H$ is called a \textbf{normalized isomorphism} if for every element $\alpha\in \Bbb{Z}G$ we have that $\varepsilon(\alpha)=\varepsilon\big(\phi(\alpha)\big)$ (or equivalently, if for every $g\in G$ we have that $\varepsilon(\phi(g))=1$), where $\varepsilon$ is the augmentation map.
 \end{defn}
 
 We now  state the conjectures. \\
  
 \textbf{The Isomorphism Problem:}\\
 (ISO) Let $G$ and $H$ be arbitrary finite groups. Does $\Bbb{Z}G\cong\Bbb{Z}H$ imply $G\cong H?$\\

 \textbf{The Zassenhaus Conjectures, $\mathbf{1974}$:} Given a finite group $G$:\\
 (ZC$1$) Is every torsion element of $V(\Bbb{Z}G)$ rationally conjugate to an element of $G$?\\
 (ZC$2$) Is every finite subgroup of $V(\Bbb{Z}G)$, with the same order as $G$, rationally conjugate to $G$?\\
 (ZC$3$) Is every finite subgroup of $V(\Bbb{Z}G)$ rationally conjugate to a subgroup of $G$?\\
 (AUT) If $\theta$ is a normalized automorphism of $\Bbb{Z}G$, then do there exist $\beta\in Aut(G)$ and $u\in \mathcal{U}(\Bbb{Q}G)$ such that $\theta(g)=u^{-1}\beta(g)u$ for all $g\in G?$\\
 
 We have the following relations between the different conjectures. For the proofs of the following one can refer to Chapter $37$, \cite{Sehgal}. 
 
 \begin{lem}\label{conjectures}
 	\begin{enumerate}
 		\item $(ZC3)\implies\; (ZC1)$ and $(ZC2)$. 
 		\item $(ZC2)\implies(ISO)$.
 		\item $(ZC2)\implies(AUT)$.
 		\item $(AUT)+(ISO)\implies(ZC2)$.
 	\end{enumerate}
 \end{lem}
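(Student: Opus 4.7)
The plan is to prove each of the four implications directly from the definitions, handling them in the order stated since the last is the most involved.

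For (1), given a torsion unit $u\in V(\Bbb{Z}G)$, the cyclic group $\langle u\rangle\subseteq V(\Bbb{Z}G)$ is finite, so (ZC3) yields $v\in\mathcal{U}(\Bbb{Q}G)$ with $v^{-1}\langle u\rangle v\subseteq G$; in particular $v^{-1}uv\in G$, which is (ZC1). For (ZC2), if $H\leq V(\Bbb{Z}G)$ is finite with $|H|=|G|$, then the subgroup $v^{-1}Hv$ produced by (ZC3) lies in $G$ and has order $|G|$, hence equals $G$.

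For (2), a normalised isomorphism $\phi:\Bbb{Z}G\to\Bbb{Z}H$ sends $G$ to a finite subgroup $\phi(G)\leq V(\Bbb{Z}H)$ of order $|G|$, and a $\Bbb{Z}$-rank comparison forces $|H|=|G|$; applying (ZC2) inside $\Bbb{Z}H$ makes $\phi(G)$ rationally conjugate to $H$, giving $G\cong\phi(G)\cong H$. For (3), if $\theta$ is a normalised automorphism of $\Bbb{Z}G$, then $\theta(G)$ is a finite subgroup of $V(\Bbb{Z}G)$ of order $|G|$, so (ZC2) yields $u\in\mathcal{U}(\Bbb{Q}G)$ with $u^{-1}\theta(G)u=G$; the map $\beta(g):=u^{-1}\theta(g)u$ is then a group automorphism of $G$, and replacing $u$ by $u^{-1}$ rewrites $\theta(g)=u\beta(g)u^{-1}$ in the form required by (AUT).

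The main obstacle will be (4). Let $H\leq V(\Bbb{Z}G)$ be finite with $|H|=|G|$. I would first show that $H$ is a $\Bbb{Z}$-basis of $\Bbb{Z}G$, so that the $\Bbb{Z}$-linear extension $\psi:\Bbb{Z}H\to\Bbb{Z}G$ of the inclusion $H\hookrightarrow V(\Bbb{Z}G)$ is a ring isomorphism. Then (ISO) supplies an abstract group isomorphism $\rho:H\to G$; setting $\theta:=\psi\circ(\Bbb{Z}\rho)^{-1}$, where $\Bbb{Z}\rho:\Bbb{Z}H\to\Bbb{Z}G$ denotes the linear extension of $\rho$, yields a normalised automorphism of $\Bbb{Z}G$ with the property that $\theta(g)=\rho^{-1}(g)\in H$ for every $g\in G$, so $\theta(G)=H$. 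Applying (AUT) to $\theta$ gives $\beta\in\mathrm{Aut}(G)$ and $u\in\mathcal{U}(\Bbb{Q}G)$ with $\theta(g)=u^{-1}\beta(g)u$, and therefore $H=\theta(G)=u^{-1}\beta(G)u=u^{-1}Gu$, which is precisely (ZC2). The one genuinely non-formal input is the $\Bbb{Z}$-linear independence of the elements of $H$ in $\Bbb{Z}G$, which, combined with the rank equality $|H|=|G|=\mathrm{rank}_{\Bbb{Z}}\Bbb{Z}G$, promotes $H$ to a basis; this rigidity follows from Berman--Higman type arguments for finite subgroups of torsion units and is the technical heart of the whole lemma.
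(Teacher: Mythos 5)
The paper itself offers no proof of this lemma --- it simply cites Chapter $37$ of \cite{Sehgal} --- and your outline is essentially the standard argument found there. Parts (1)--(3) are correct as written (in (2) you should note that an arbitrary isomorphism $\Bbb{Z}G\cong\Bbb{Z}H$ may be replaced by a normalized one, by the same augmentation-correction the paper records before Theorem \ref{application}, but this is routine).

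The genuine gap is in your reduction for (4). You claim that $\Bbb{Z}$-linear independence of the elements of $H$, ``combined with the rank equality $|H|=|G|=\mathrm{rank}_{\Bbb{Z}}\Bbb{Z}G$,'' promotes $H$ to a $\Bbb{Z}$-basis. That inference is false over $\Bbb{Z}$: a linearly independent set of full rank only spans a finite-index sublattice (compare $\{2\}\subset\Bbb{Z}$), so independence plus rank gives $\Bbb{Q}$-span$(H)=\Bbb{Q}G$, not $\Bbb{Z}$-span$(H)=\Bbb{Z}G$ --- and it is the integral equality that your construction needs, since $\psi$ must be a ring isomorphism of $\Bbb{Z}H$ \emph{onto} $\Bbb{Z}G$ before (ISO) can be invoked. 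The missing step is a second application of the Berman--Higman coefficient argument: write $\alpha\in\Bbb{Z}G$ as $\alpha=\sum_{h\in H}q_h h$ with $q_h\in\Bbb{Q}$ (possible once $H$ spans $\Bbb{Q}G$), multiply by $h^{-1}$, and compare coefficients of $1$; since every nontrivial torsion element of $V(\Bbb{Z}G)$ has identity-coefficient $0$, the coefficient of $1$ in $\alpha h^{-1}$ equals $q_h$, which is therefore an integer. This yields $\Bbb{Z}G=\Bbb{Z}H$ (the lemma from Sehgal's Chapter $37$ that is really being used), and with it the rest of your argument for $(AUT)+(ISO)\implies(ZC2)$ --- $\theta:=\psi\circ(\Bbb{Z}\rho)^{-1}$ is a normalized automorphism with $\theta(G)=H$, then apply (AUT) --- goes through verbatim.
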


\section{Proof of Theorem \ref{main}}\label{Section 3}

 Let $RG$ be the group ring of a torsion group $G$ over a semiprime ring $R$ with $1$ such that $[G:\mathcal{Z}(G)]<\infty$. Let $\sigma$ and $\tau$ be $R$-algebra central endomorphisms on $RG$, that is, they fix the center of $RG$ elementwise. Let $\delta$ be an $R$-linear $(\sigma,\tau)$-derivation of the group ring $RG$. We need to find a ring $T$ such that $R\subset T$ and $\mathcal{Z(R)}\subset \mathcal{Z}(T)$ and some $\alpha\in TG$ such that $\delta=\delta_\alpha$. That is, we need to show that $\delta_T$ is a $T$-linear $(\sigma_T,\tau_T)$-inner derivation of $TG$ as then there will exist some $\alpha\in TG$ such that for all $g\in G$, $$\delta(g)=\delta_T(g)=\alpha\tau_T(g)-\sigma_T(g)\alpha=\alpha\tau(g)-\sigma(g)\alpha\subseteq RG,$$ 
where the notations are same as in Lemma \ref{extension}. So $\delta$ will be equal to $\delta_\alpha$ for some $\alpha\in TG.$

We first observe with the help of the arguments in the first paragraph of the proof of Lemma \ref{extension}, that $\delta, \sigma$ and $\tau$ restricted to $ZG$ induces a $Z$-linear $(\sigma,\tau)$-derivation of $ZG$. 

First assume that $G$ is finite and either $R$ does not have torsion elements or that if $R$ has $p$-torsion elements, then $p$ does not divide the order of $G$. Let $R'$ be the subring of $Z$ generated by the finitely many elements of $R$ which occur as coefficients of elements in $\delta(G),\;\sigma(G)$ and $\tau(G).$ Then $\delta$ restricts to an $R'$-linear $(\sigma,\tau)$-derivation of $R'G.$ Let us denote the restriction of $\delta,\;\sigma,\;\tau$ to $R'G$ as $\delta',\sigma',\tau'$ respectively. \\

Now, $R'$ is commutative, semiprime and Noetherian with $1$. Let $P_1,\ldots,P_n$ be the finitely many minimal prime ideals of $R'$ (as $R'$ is Noetherian) and $F_i$ be the minimal algebraically closed field containing $R'/P_i$, $1\leq i\leq n.$ That is, let $F_i$ be the algebraic closure of the field of fractions of $R'/P_i$ for each $i$. Since $R'$ is semiprime we have $\bigcap_{i=1}^{n}P_i=\{0\}.$ Hence $R'$ can be embedded in $T'=\bigoplus_{i=1}^{n}F_i.$ Now by Lemma \ref{extension}, $\delta'$ can be extended to $(\sigma'_{T'},\tau'_{T'})$-derivation $\delta'_{T'}$ of $T'G$. Thus, it is enough to prove that $\delta'_{T'}$ is $(\sigma'_{T'},\tau'_{T'})$-inner in $T'G$.\\

Note that if $F_i$ has $p_i$-torsion elements for a prime integer $p_i$, then $p_i\in P_i$. Hence, $p_i\big(\cap_{j\neq i}P_j\big)=\{0\}.$ Thus, $R'$ has $p_i$-torsion elements and therefore, $p_i$ does not divide $|G|,$ $1\leq i\leq n.$ Hence, by Maschke's Theorem, $T'G$ is the direct sum of full matrix rings over fields, say, $T'G=I_1\oplus\cdots\oplus I_k$ where each $I_j$ is generated as an ideal by a central idempotent.\\

 Let $e_i$ be the central idempotent generating $I_i$ as an ideal for each $1\leq i\leq k.$ Now as $\sigma'_{T'},\tau'_{T'}$ are central endomorphisms, for $i\neq j$ we have  $0=\delta'_{T'}(e_ie_j)=\delta'_{T'}(e_i)e_j+e_i\delta'_{T'}(e_j).$ Thus multiplying with $e_j$ we get $\delta'_{T'}(e_i)e_j=0$ for every $j\neq i$. Hence $\delta'_{T'}(e_i)\in I_i$ and this implies $\delta'_{T'}(I_i)\subseteq I_i.$ So $\delta'_{T'},\sigma'_{T'},\tau'_{T'}$ restricted to each component $I_i$ will give a $(\sigma'_{T'},\tau'_{T'})$-derivation on that component which will be inner by Corollary \ref{csa-sigma-inner} induced by an element, say, $\alpha_i\in I_i.$  Thus $\delta'_{T'}$ will be $(\sigma'_{T'},\tau'_{T'})$-inner in $T'G$ induced by the element $\alpha=\alpha_1+\cdots+\alpha_k$. So $\delta'(\gamma)=\alpha\tau'_{T'}(\gamma)-\sigma'_{T'}(\gamma)\alpha$ for all $\gamma\in T'G.$ Now we have a chain of rings, $R'\subset Z\subset R$ and $R'\subset T'$. Let $T=T'\otimes_Z R$. Then $T'\subset T$, so we have $\alpha\in TG$ and $\delta=\delta_\alpha$. Hence the result is proved for the case when $G$ is finite.\\

Now, we consider the general case when $G$ is a torsion group with $[G:\mathcal{Z}(G)]<\infty.$ We first notice that $\delta(\mathcal{Z}(G))=0.$ In fact, if $z\in\mathcal{Z}(G)$ with $o(z)=m$, then $\delta(z^m)=mz^{m-1}\delta(z)$ (by Lemma \ref{useful lemma}, part \ref{induction}) and this implies $\delta(z)=0$ as $z$ is inveritble.\\
Now let $X=\{g_1,g_2,\ldots,g_n\}$ be a transversal of $\mathcal{Z}(G)$ in $G$. For every index $i$, $1\leq i\leq n$, we write $$\delta(g_i)=\sum_{i,j,k}\alpha_{ijk}z_{ijk}g_k,\qquad z_{ijk}\in\mathcal{Z}(G),\;\alpha_{ijk}\in T.$$
Also for $i,j=1,2,\cdots,n$ let $g_ig_j=c_{ij}g_k$, $c_{ij}\in\mathcal{Z}(G).$ Denote by $H$ the subgroup of $G$ generated by all the elements $z_{ijk}, c_{ij}, g_k$. Since $G$ is torsion and $\mathcal{Z}(G)$ is abelian, it follows that $H$ is finite. Also, the restriction $\delta|_{RH}$ is an $R$-linear $(\sigma,\tau)$-derivation of $RH.$ By the first case $\delta|_{RH}={\delta_\alpha|_{RH}}$ for some $\alpha\in TH$. Now, given an element $g\in G$, write $g=zg_i$ with $z\in\mathcal{Z}(G)$, $1\leq i\leq n.$ Then:
$$\delta(g)=z\delta(g_i)=z(\alpha\tau(g_i)-\sigma(g_i)\alpha)=\alpha z\tau(g_i)-z\sigma(g_i)\alpha=\alpha\tau(g)-\sigma(g)\alpha.$$ 
Hence, the result.

\section{Applications}\label{Section 4}

We first discuss the necessary and sufficient conditions for a $\Bbb{Z}$-linear $(\sigma,\tau)$-derivation on $\Bbb{Z}G$ to be $(\sigma,\tau)$-inner for a given pair of endomorphisms $\sigma$ and $\tau$ on $\Bbb{Z}G.$ Recall that the notation $a|b$ means $a$ divides $b$ for $a,b\in \Bbb{Z}$.

\begin{thm}\label{nec suf inner} Let $G$ be a finite group and $\sigma$ and $\tau$ be central endomorphisms of $\Bbb{Z}G$. Let $\delta$ be a $\Bbb{Z}$-linear $(\sigma,\tau)$-derivation of $\Bbb{Z}G$. Let us denote $\delta(g)=\sum_{x\in G}m^g_xx,\;\tau(g)=\sum_{t\in G}c^g_tt$ and $\sigma(g)=\sum_{s\in G}b^g_ss$ for every $g\in G$. Then $\delta$ will be $(\sigma,\tau)$-inner in $\Bbb{Z}G$ if and only if ${gcd}_{h\in G}\big(c^g_{h^{-1}x}-b^g_{xh^{-1}}\big)\Big|m^g_x$ for every $g\in G$ and $x\in G$. \end{thm}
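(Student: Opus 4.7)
The plan is to translate the condition $\delta = \delta_\alpha$ into a system of coordinate equations in the $\mathbb{Z}$-basis $G$ of $\mathbb{Z}G$, and then analyze when that system admits an integer solution. Writing a candidate $\alpha$ as $\sum_{h \in G} a_h h$, I would expand
\[
\delta_\alpha(g) \;=\; \alpha\tau(g) - \sigma(g)\alpha \;=\; \sum_{h,t} a_h c^g_t\, ht \;-\; \sum_{s,h} b^g_s a_h\, sh,
\]
and reindex the first sum by $x = ht$ (so $t = h^{-1}x$) and the second by $x = sh$ (so $s = xh^{-1}$). Comparing the coefficient of $x$ with $\delta(g) = \sum_x m^g_x x$ yields the master identity
\[
m^g_x \;=\; \sum_{h \in G} a_h\bigl(c^g_{h^{-1}x} - b^g_{xh^{-1}}\bigr), \qquad g, x \in G, \quad (\ast)
\]
which is equivalent to $\delta = \delta_\alpha$. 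Necessity then drops out: if $\alpha \in \mathbb{Z}G$ solves $(\ast)$, the right-hand side exhibits $m^g_x$ as an integer linear combination of the family $\{c^g_{h^{-1}x} - b^g_{xh^{-1}} : h \in G\}$, and hence is divisible by their gcd.

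For sufficiency I would first secure a solution of $(\ast)$ in an extension. Theorem \ref{main} applied to $R = \mathbb{Z}$ (which is semiprime and torsion-free) furnishes a ring $T \supset \mathbb{Z}$ and an element $\alpha_T = \sum_h t_h h \in TG$, with $t_h \in T$, such that $(\ast)$ holds with $a_h$ replaced by $t_h$; equivalently, the natural extension $\delta_T$ is $(\sigma_T, \tau_T)$-inner on $TG$. On the other hand, the gcd hypothesis together with the Bezout identity in $\mathbb{Z}$ gives, for each individual pair $(g, x)$, an integer tuple $(n^{(g,x)}_h)_h$ solving only the single equation indexed by $(g, x)$.

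The main obstacle is the descent: to glue the globally consistent $T$-valued solution with the pointwise integer solutions into one tuple $(a_h) \in \mathbb{Z}^G$ satisfying every equation of $(\ast)$ simultaneously. My plan is to exploit Lemma \ref{useful lemma}(\ref{sigma inner commutator}): all $\beta \in TG$ with $\delta = \delta_\beta$ form a single coset of the $(\sigma,\tau)$-centralizer $C = \{\gamma \in TG : \gamma\tau(a) = \sigma(a)\gamma \text{ for all } a \in \mathbb{Z}G\}$, so it suffices to show that this coset meets $\mathbb{Z}G$ precisely when the coordinate-wise gcd condition holds. Concretely, I would view $(\ast)$ as an integer linear system whose coefficient matrix $\bigl(c^g_{h^{-1}x} - b^g_{xh^{-1}}\bigr)$ carries the $G$-translation symmetry inherited from the group multiplication, and argue that this symmetry collapses the general Smith-normal-form criterion for integer solvability to the stated per-coordinate divisibilities, thereby producing the desired integer tuple $(a_h)$ and finishing the proof.
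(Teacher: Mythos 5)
Your setup coincides with the paper's: write a candidate $\alpha=\sum_{h\in G}a_hh$, expand $\alpha\tau(g)-\sigma(g)\alpha$, and compare coefficients to get the system $\sum_{h\in G}a_h\big(c^g_{h^{-1}x}-b^g_{xh^{-1}}\big)=m^g_x$ for all $g,x\in G$; the necessity direction then follows at once, exactly as in the paper. (One small difference of route: the paper produces its auxiliary solution by extending scalars to $\Bbb{Q}$ and invoking Theorem \ref{previous}, part \ref{part 1}, which yields $\alpha\in\Bbb{Q}G$ directly, whereas you invoke Theorem \ref{main} to get a solution over some larger ring $T$; that is workable, but you would still need to pass from $T$-solvability to rational solvability of the integer system before any integrality discussion can begin, so the $\Bbb{Q}G$ route is the more convenient one.)

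The genuine gap is in the sufficiency direction, and you have in effect named it yourself. From the hypothesis you obtain, for each fixed pair $(g,x)$, an integer solution of that single equation by Bezout, plus one simultaneous solution with coefficients in $T$ (or $\Bbb{Q}$); gluing these into a single integer tuple $(a_h)$ satisfying all the equations at once is precisely the content of the theorem, and your proposal stops at announcing a plan (``the translation symmetry collapses the Smith-normal-form criterion to the per-coordinate divisibilities'') without supplying any argument. The implication you would need is false for general integer systems: the system $x+y=1$, $x-y=0$ has a rational solution and each equation has coefficient gcd $1$ dividing its right-hand side, yet it has no integer solution; so some structural property of this particular coefficient matrix must be identified and used, and none is in your write-up. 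For comparison, the paper's own proof is short because, after producing the rational solution via Theorem \ref{previous}, part \ref{part 1}, it treats the equations coordinate-by-coordinate and appeals to the classical solvability criterion for a single linear Diophantine equation; it does not carry out the Smith-normal-form reduction you propose. As written, your sufficiency direction is therefore unproven, and closing it (whether by executing your symmetry argument or otherwise) is the essential missing step.
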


\begin{proof}As $\sigma_\Bbb{Q},\;\tau_\Bbb{Q}$ are central $\Bbb{Q}$-automorphisms of $\Bbb{Q}G$, by Theorem \ref{previous}, part \ref{part 1}, $\delta_\Bbb{Q}$ will be $(\sigma_\Bbb{Q},\tau_\Bbb{Q})$-inner in $\Bbb{Q}G$. Hence, $\delta(g)=\alpha\tau(g)-\sigma(g)\alpha\subseteq\Bbb{Z}G$ for all $g\in G$ and some $\alpha\in\Bbb{Q}G$. Now,
	\begin{eqnarray*}\delta(g)&=&\alpha\tau(g)-\sigma(g)\alpha\\
		&=&\sum_{h\in G}\alpha_hh\sum_{t\in G}c^g_tt- \sum_{s\in G}b^g_ss\sum_{h\in G}\alpha_hh\\
		&=&\sum_{ht=x}\alpha_hc^g_tx-\sum_{sh=x}b^g_s\alpha_hx=\sum_{x\in G}m^g_xx\in\Bbb{Z}G.
	\end{eqnarray*}
	So we have for every $x\in G$, $$\sum_{h\in G}\alpha_h\big(c^g_{h^{-1}x}-b^g_{xh^{-1}}\big)=m^g_x.$$
	Thus for each $x\in G$ we have a non-homogeneous linear equation in $n$ variables $x_1,x_2,\ldots,x_n$ with integral coefficients where $n$ is the order of $G$. That is, we have:
	$$\Big(c^g_{{h_1^{-1}}x}-b^g_{x{h_1^{-1}}}\Big)x_1+\Big(c^g_{{h_2^{-1}}x}-b^g_{x{h_2^{-1}}}\Big)x_2+\cdots+\Big(c^g_{{h_n^{-1}}x}-b^g_{x{h_n^{-1}}}\Big)x_n=m^g_x,$$
	where $h_i\in G$, $1\leq i\leq n$. Again for each $g\in G$ we will have another $n$ such equations and so on. Now $\alpha_{h_i}\in\Bbb{Q}$, $1\leq i\leq n$ is a solution for all these equations. As the coefficients are all integers, these equations will have an integer solution if and only if ${gcd}_{h\in G}\big(c^g_{h^{-1}x}-b^g_{xh^{-1}}\big)\Big|m^g_x$ for every $g\in G$. This in turn implies that $\delta$ will be $(\sigma,\tau)$-inner in $\Bbb{Z}G$ if and only if ${gcd}_{h\in G}\big(c^g_{h^{-1}x}-b^g_{xh^{-1}}\big)\Big|m^g_x$ for every $g\in G$.
\end{proof}

\begin{cor}
	Let $G$ be a finite group and $\sigma$ and $\tau$ be group endomorphisms that can be realized by conjugation with a unit in $\Bbb{Z}G$ and such that they are central endomorphisms of $\Bbb{Z}G$. Then $H^1(\Bbb{Z}G,{}_\sigma\Bbb{Z}G_\tau)=0$.
\end{cor}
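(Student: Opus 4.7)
The plan is to reduce to the case of an ordinary (untwisted) derivation by a two-sided conjugation with the units implementing $\sigma$ and $\tau$, and then to invoke the untwisted case of Theorem~\ref{previous}. By hypothesis there exist units $u,v\in\mathcal{U}(\Bbb{Z}G)$ with $\sigma(a)=u^{-1}au$ and $\tau(a)=v^{-1}av$ for every $a\in\Bbb{Z}G$. Given a $\Bbb{Z}$-linear $(\sigma,\tau)$-derivation $\delta$ of $\Bbb{Z}G$, I would define
$$\delta'(a):=u\,\delta(a)\,v^{-1},$$
which lies in $\Bbb{Z}G$ because $u,v^{-1}\in\Bbb{Z}G$.

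A direct computation, using the identities $v\tau(b)v^{-1}=b$ and $u\sigma(a)u^{-1}=a$, gives
$$\delta'(ab)=u\delta(a)v^{-1}\cdot v\tau(b)v^{-1}+u\sigma(a)u^{-1}\cdot u\delta(b)v^{-1}=\delta'(a)\,b+a\,\delta'(b),$$
so $\delta'$ is an ordinary $\Bbb{Z}$-linear derivation of $\Bbb{Z}G$. Applying Theorem~\ref{previous}, part~\ref{part 2} with $R=\Bbb{Z}$ and $\sigma=\tau=\mathrm{id}$ (the identity is a group homomorphism of $G$ fixing $\mathcal{Z}(\Bbb{Z}G)$ elementwise, and $\Bbb{Z}$ is a torsion-free integral domain that is not a field), we obtain $H^1(\Bbb{Z}G,\Bbb{Z}G)=0$, hence there exists $\alpha'\in\Bbb{Z}G$ with $\delta'(a)=\alpha'a-a\alpha'$ for all $a\in\Bbb{Z}G$.

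Setting $\alpha:=u^{-1}\alpha'v\in\Bbb{Z}G$, a straightforward check yields
$$\alpha\tau(a)-\sigma(a)\alpha=u^{-1}\alpha'\,av-u^{-1}a\,\alpha'v=u^{-1}\bigl(\alpha'a-a\alpha'\bigr)v=u^{-1}\delta'(a)v=\delta(a),$$
so $\delta=\delta_\alpha$ is $(\sigma,\tau)$-inner. Since $\delta$ was an arbitrary $\Bbb{Z}$-linear $(\sigma,\tau)$-derivation, this yields $H^1(\Bbb{Z}G,{}_\sigma\Bbb{Z}G_\tau)=0$. The only point requiring any care is that each intermediate element stays inside $\Bbb{Z}G$, which is automatic because $u,v,u^{-1},v^{-1}$ all lie in $\Bbb{Z}G$; beyond this bookkeeping there is no substantive obstacle, since the hard content is entirely absorbed into the cited untwisted case of Theorem~\ref{previous}.
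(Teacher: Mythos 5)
Your argument is correct, but it is not the route the paper takes. The paper disposes of this corollary in one line by combining Theorem~\ref{previous} part~\ref{part 2} with the divisibility criterion of Theorem~\ref{nec suf inner}: one first gets innerness of $\delta_{\Bbb{Q}}$ over $\Bbb{Q}G$ and then uses the fact that $\sigma(g),\tau(g)$ are group elements to see that the integers $c^g_{h^{-1}x}-b^g_{xh^{-1}}$ are $0$ or $\pm 1$, so the gcd condition forces an integral inducing element. You instead never leave $\Bbb{Z}G$: writing $\sigma(a)=u^{-1}au$, $\tau(a)=v^{-1}av$ with $u,v\in\mathcal{U}(\Bbb{Z}G)$, the map $\delta'(a)=u\delta(a)v^{-1}$ is an ordinary derivation, the classical untwisted case of Theorem~\ref{previous} part~\ref{part 2} (with $\sigma=\tau=\mathrm{id}$) makes it inner by some $\alpha'\in\Bbb{Z}G$, and conjugating back with $\alpha=u^{-1}\alpha'v$ exhibits $\delta=\delta_\alpha$; all the verifications you give are correct. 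Your reduction buys several things: it avoids rational coefficients and the gcd analysis of Theorem~\ref{nec suf inner} altogether, it only uses the hypothesis that $\sigma$ and $\tau$ agree with conjugation by units of $\Bbb{Z}G$ (so the assumptions that they carry $G$ into $G$ and are central are not even needed --- centrality is automatic for unit conjugation), and it transfers verbatim to $RG$ over any coefficient ring for which ordinary $R$-linear derivations of $RG$ are inner. What the paper's route buys, by contrast, is uniformity with the surrounding material: the gcd criterion of Theorem~\ref{nec suf inner} handles arbitrary central endomorphisms and also yields, as the subsequent remark notes, an independent proof of Theorem~\ref{previous} part~\ref{part 2} for $R=\Bbb{Z}$, whereas your trick is specific to twists implemented by units. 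The only point to make explicit, if you polish this, is the passage from ``$\sigma$ is realized on $G$ by conjugation with $u$'' to the identity $\sigma(a)=u^{-1}au$ on all of $\Bbb{Z}G$, which is immediate from $\Bbb{Z}$-linearity of both sides.
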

\begin{proof}
	It follows easily from Theorem \ref{previous} part \ref{part 2} and Theorem \ref{nec suf inner}.
\end{proof}

\begin{rem}
	Note that now we obtain Theorem \ref{previous} part \ref{part 2} for the case $R=\Bbb{Z}$ as a corollary of Theorem \ref{nec suf inner} as $\sigma(G),\tau(G)\subseteq G$. The elements $\big(c^g_{h^{-1}x}-b^g_{xh^{-1}}\big)$ will be either $0$ or $1$. Hence the result. 
\end{rem}

Now we discuss the connection of the main conjectures to the twisted derivations of integral group rings. For a ring $R$, we will denote by $[R,R]$ the additive subgroup of $R$ generated by the Lie brackets $[x,y]=xy-yx$ for $x,y\in R.$ Given any automorphism $\phi$ of $\Bbb{Z}G$, we can define a unique normalized automorphism $\psi$ of $\Bbb{Z}G$ in the following manner: 
$$\psi\Big(\sum_{g\in G}r_gg\Big)=\sum_{g\in G}r_g\varepsilon(\phi(g))^{-1}\phi(g).$$
So very little generality is lost if we assume $\sigma$ and $\tau$ to be normalized automorphisms of $\Bbb{Z}G$ in the next theorem.
\begin{thm}\label{application}
	Let $G$ be a finite group and $\sigma$ and $\tau$ be normalized central automorphisms of $\Bbb{Z}G$. If $\Bbb{Z}G$ has a positive solution to $(ZC2)$, that is, every finite subgroup of $V(\Bbb{Z}G)$ with the same order as $G$ is rationally conjugate to $G$, then:
	\begin{enumerate}
		\item \label{Appl 1}There will exist $u\in \mathcal{U}(\Bbb{Q}G)$ and  $\alpha\in\Bbb{Q}G$ such that any $\Bbb{Z}$-linear $(\sigma,\tau)$-derivation $\delta$ of $\Bbb{Z}G$ will be of the form 
		\begin{equation}\label{eqZC2}
		\delta\left(\sum_{g\in G}a_gg\right) \equiv \alpha\left(\sum_{g\in G}a_g\Big(u\tau(g)u^{-1}-\sigma(g)\Big)\right)\;mod\;[\Bbb{Q}G,\Bbb{Q}G]
		\end{equation}  
		\item There will exist central automorphisms $\sigma_1$ and $\tau_1$ of $\Bbb{Z}G$ depending on $\sigma$ and $\tau$ respectively such that $$H^1(\Bbb{Z}G,{}_{\sigma_1}\Bbb{Z}G_{\tau_1})=0.$$
	\end{enumerate}
	
\end{thm}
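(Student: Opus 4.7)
The overall plan is to apply Lemma~\ref{conjectures}, which yields (ZC2) $\Rightarrow$ (AUT), to decompose the normalized central automorphisms $\sigma$ and $\tau$ of $\Bbb{Z}G$ into a conjugation by a unit of $\Bbb{Q}G$ composed with a genuine group automorphism of $G$, and then feed the two pieces into the two parts of Theorem~\ref{previous} applied to the appropriate coefficient rings. (AUT) is the entire source of leverage for both parts.

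I would dispatch part (2) first. By (AUT) there exist $\sigma_1,\tau_1\in Aut(G)$ and $u_\sigma,u_\tau\in\mathcal{U}(\Bbb{Q}G)$ with $\sigma(g)=u_\sigma^{-1}\sigma_1(g)u_\sigma$ and $\tau(g)=u_\tau^{-1}\tau_1(g)u_\tau$; extend $\sigma_1,\tau_1$ $\Bbb{Z}$-linearly to ring automorphisms of $\Bbb{Z}G$. Centrality of $\sigma$ forces centrality of $\sigma_1$: for $z\in\mathcal{Z}(\Bbb{Z}G)$ the ring automorphism $\sigma_1$ sends $z$ into $\mathcal{Z}(\Bbb{Z}G)$, hence $u_\sigma^{-1}\sigma_1(z)u_\sigma=\sigma_1(z)$, so $\sigma_1(z)=\sigma(z)=z$; the same argument applies to $\tau_1$. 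Now $\sigma_1,\tau_1$ are $\Bbb{Z}$-linear extensions of group automorphisms fixing $\mathcal{Z}(\Bbb{Z}G)$ elementwise, so Theorem~\ref{previous}(\ref{part 2}) with $R=\Bbb{Z}$ delivers $H^1(\Bbb{Z}G,{}_{\sigma_1}\Bbb{Z}G_{\tau_1})=0$.

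For part (1), first use Lemma~\ref{extension} (with $R=\Bbb{Z}$, $T=\Bbb{Q}$) to extend $\delta$ $\Bbb{Q}$-linearly to a $(\sigma_{\Bbb{Q}},\tau_{\Bbb{Q}})$-derivation $\delta_{\Bbb{Q}}$ of $\Bbb{Q}G$; the extensions $\sigma_{\Bbb{Q}},\tau_{\Bbb{Q}}$ remain central. Then Theorem~\ref{previous}(\ref{part 1}) produces $\alpha\in\Bbb{Q}G$ with $\delta(g)=\alpha\tau(g)-\sigma(g)\alpha$ for every $g\in G$. Invoke (AUT) once more to pick $u\in\mathcal{U}(\Bbb{Q}G)$ and $\beta\in Aut(G)$ with $u\tau(g)u^{-1}=\beta(g)\in G$, and compute
\[
\delta(g)-\alpha\bigl(u\tau(g)u^{-1}-\sigma(g)\bigr) \;=\; [\alpha,\sigma(g)] + \alpha\bigl(\tau(g)-u\tau(g)u^{-1}\bigr).
\]
The first summand is a single commutator. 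For the second, rewrite $\tau(g)-u\tau(g)u^{-1}=[u^{-1},u\tau(g)]$, exploit the identity $u\tau(g)u^{-1}=\beta(g)\in G$ to re-express the product as a sum of Lie brackets, and then extend from $g$ to $\sum a_g g$ by $\Bbb{Z}$-linearity of all three maps involved, yielding the stated congruence modulo $[\Bbb{Q}G,\Bbb{Q}G]$.

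The principal obstacle lies in that last commutator manipulation of part (1). Because $[\Bbb{Q}G,\Bbb{Q}G]$ is only an additive subgroup and not a two-sided ideal, left multiplication by $\alpha$ does not a priori preserve it, so one cannot simply observe that $\tau(g)-u\tau(g)u^{-1}$ is a commutator. Cycling $\alpha u^{-1}\beta(g)u\equiv u\alpha u^{-1}\beta(g)\pmod{[\Bbb{Q}G,\Bbb{Q}G]}$ recasts the problematic term as $(u\alpha u^{-1}-\alpha)\beta(g)$, and the resolution will require either appealing to the Wedderburn decomposition of $\Bbb{Q}G$ to reduce to a block-by-block trace computation, or refining the choice of $\alpha$ within its equivalence class under Lemma~\ref{useful lemma}(\ref{sigma inner commutator}) to secure a representative that commutes with $u$.
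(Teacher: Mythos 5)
Your part (2) is correct and is exactly the paper's argument: pass from (ZC2) to (AUT) via Lemma \ref{conjectures}, write $\sigma(g)=u_\sigma^{-1}\sigma_1(g)u_\sigma$, $\tau(g)=u_\tau^{-1}\tau_1(g)u_\tau$ with $\sigma_1,\tau_1\in Aut(G)$, and invoke Theorem \ref{previous}(\ref{part 2}); your verification that $\sigma_1,\tau_1$ are central is a detail the paper merely asserts, and your argument for it is sound. The opening of part (1) also matches the paper: extend $\delta$ to $\Bbb{Q}G$, obtain $\alpha$ from Theorem \ref{previous}(\ref{part 1}), and produce a unit $u$ from rational conjugacy. (The paper gets $u$ by applying (ZC2) to conjugate $\tau(G)$ onto $\sigma(G)$, writing $\tau(g)=u^{-1}\sigma(h_g)u$ with $h_g\in G$, whereas you use (AUT) to conjugate $\tau(G)$ into $G$; this difference is immaterial, since both routes leave the same residual term.)

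The genuine gap is that you never prove the one statement on which part (1) rests, namely $\alpha\bigl(\tau(g)-u\tau(g)u^{-1}\bigr)\in[\Bbb{Q}G,\Bbb{Q}G]$. You correctly note that this is not a formal consequence of $xy\equiv yx$, because $[\Bbb{Q}G,\Bbb{Q}G]$ is not stable under multiplication: already in $M_2(\Bbb{Q})$, taking $a=x=E_{11}$ and $u$ the permutation matrix, $a\bigl(x-uxu^{-1}\bigr)=E_{11}$ has nonzero trace, so for unconstrained $\alpha,u$ the membership fails. You then name two possible remedies (a component-wise reduced-trace computation via the Wedderburn decomposition, or re-choosing $\alpha$ within its class from Lemma \ref{useful lemma}(\ref{sigma inner commutator}) so that it commutes with $u$) but carry out neither, so as written the congruence \ref{eqZC2} is not established. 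You should be aware that the paper does not do more at this point either: after inserting $-\alpha u^{-1}u\sigma(h_g)+\alpha\sigma(h_g)$ it passes from $\alpha u^{-1}\sigma(h_g)u-\alpha\sigma(h_g)$ to $0$ modulo $[\Bbb{Q}G,\Bbb{Q}G]$ with no further justification — precisely the step you isolate as the obstacle. So your proposal reproduces the paper's proof up to the point where the paper's own argument is thinnest; to make it a complete proof you would have to actually execute one of your proposed fixes, for instance the block-by-block trace analysis (using that central automorphisms of $\Bbb{Q}G$ are inner on each simple component by Skolem--Noether, together with the specific relations tying $\alpha$, $u$, $\sigma$, $\tau$ and the integrality of $\delta(g)$), since the needed congruence does not hold for arbitrary $\alpha$ and $u$.
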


\begin{proof}\begin{enumerate}
		\item  Any $\Bbb{Z}$-linear $(\sigma,\tau)$-derivation $\delta$ of $\Bbb{Z}G$ can be extended to a $\Bbb{Q}$-linear $(\sigma_{\Bbb{Q}},\tau_\Bbb{Q})$-derivation of $\Bbb{Q}G$. As by hypothesis, $\sigma_\Bbb{Q},\;\tau_\Bbb{Q}$ are central $\Bbb{Q}$-automorphisms of $\Bbb{Q}G$, by Theorem \ref{previous}, part \ref{part 1}, $\delta_\Bbb{Q}$ will be $(\sigma_\Bbb{Q},\tau_\Bbb{Q})$-inner in $\Bbb{Q}G$. Hence, $\delta(g)=\alpha\tau(g)-\sigma(g)\alpha\subseteq\Bbb{Z}G$ for all $g\in G$ and some $\alpha\in\Bbb{Q}G$. Now as $G$ is finite and $\sigma$ and $\tau$ are normalized automorphisms of $\Bbb{Z}G$, we have $\tau(G)$ and $\sigma(G)$ are finite subgroups of $V(\Bbb{Z}G)$. As $\Bbb{Z}G$ satisfies $ZC2$, we have $\tau(G)$ and $\sigma(G)$ are rationally conjugate. That is, there exists  $u\in\mathcal{U}(\Bbb{Q}G)$ such that $\tau(G)=u^{-1}\sigma(G)u$. So for $g\in G$, we will have $\tau(g)=u^{-1}\sigma(h_g)u$ for some $h_g\in G.$ Then for $g\in G$, we have
	\begin{eqnarray*}\delta(g)&=&\alpha\tau(g)-\sigma(g)\alpha\\
		&=&\alpha \sigma(g)-\sigma(g)\alpha+\alpha\tau(g)-\alpha\sigma(g)\\
		&\equiv&\alpha(\tau-\sigma)(g)\;mod\; [\Bbb{Q}G,\Bbb{Q}G]\\
		&=& \alpha u^{-1}\sigma(h_g)u-\alpha\sigma(g)\;mod\;[\Bbb{Q}G,\Bbb{Q}G]\\
		&=&\alpha u^{-1}\sigma(h_g)u-\alpha u^{-1}u\sigma(h_g)+\alpha\sigma(h_g)-\alpha\sigma(g)\;mod\;[\Bbb{Q}G,\Bbb{Q}G]\\
		&\equiv&\alpha\sigma(h_g-g)\;mod\;[\Bbb{Q}G,\Bbb{Q}G]\\
		&=&\alpha \big(u\tau(g)u^{-1}-\sigma(g)\big)\;mod\;[\Bbb{Q}G,\Bbb{Q}G].
		\end{eqnarray*}
	Thus extending by linearity we get \ref{eqZC2}.\\
	 
	 \item As $\Bbb{Z}G$ satisfies $(ZC2)$, it satisfies $(AUT)$ as well (Lemma \ref{conjectures}). As $\sigma$ and $\tau$ are central automorphisms of $\Bbb{Z}G$, there exist $u,v\in\mathcal{U}(\Bbb{Q}G)$ and $\sigma_1,\tau_1\in Aut(G)$ such that $\sigma(g)=u^{-1}\sigma_1(g)u$ and $\tau(g)=v^{-1}\tau_1(g)v$ for every $g\in G.$ So $\sigma_1$ and $\tau_1$ when extended linearly will be central automorphisms of $\Bbb{Z}G$ and hence by Theorem \ref{previous} part \ref{part 2}, we will have $H^1(\Bbb{Z}G,{}_{\sigma_1}\Bbb{Z}G_{\tau_1})=0.$ 
	\end{enumerate}
	\end{proof}
\begin{rem}
	It follows from Lemma \ref{conjectures} and Theorem \ref{application} that if a group $G$ is such that $\Bbb{Z}G$ satisfies (ZC3) then for central automorphisms $\sigma$ and $\tau$ of $\Bbb{Z}G$, a $\Bbb{Z}$-linear $(\sigma,\tau)$-derivation $\delta$ of $\Bbb{Z}G$ will satisfy similar properties as Theorem \ref{application} part \ref{Appl 1}. If $\Bbb{Z}G$ satisfies (ISO) and (AUT) then in addition to the above there will exist central automorphisms $\sigma_1$ and $\tau_1$ of $\Bbb{Z}G$ depending on $\sigma$ and $\tau$ respectively such that $H^1(\Bbb{Z}G,{}_{\sigma_1}\Bbb{Z}G_{\tau_1})=0.$
\end{rem}
 Now we come to twisted derivations of commutative group algebras.

\begin{thm}\label{commutative group algebra}
	Let $G$ be an abelian group and $R$ be a commutative ring with $1$ such that either $R$ does not have torsion elements or that if $R$ has $p$-torsion elements, then $p$ does not divide the order of any element of $G$. Let $\sigma$ and $\tau$ be two different endomorphisms of $RG$. If there exists $b\in RG$ such that $\tau(b)-\sigma(b)\in\mathcal{U}(RG)$, then any $R$-linear $(\sigma,\tau)$-derivation of $RG$ is of the form: $\delta=(\tau(b)-\sigma(b))^{-1}\delta(b)(\tau-\sigma)$. 
	\end{thm}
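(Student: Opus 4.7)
The plan is to reduce the problem to a Leibniz-exchange identity that is forced by commutativity of $RG$. First I would verify that the map $\Delta := \tau - \sigma$ is itself a $(\sigma,\tau)$-derivation of $RG$: a direct expansion gives
\[
(\tau-\sigma)(xy) = \bigl(\tau(x)-\sigma(x)\bigr)\tau(y) + \sigma(x)\bigl(\tau(y)-\sigma(y)\bigr),
\]
so $\Delta$ satisfies the $(\sigma,\tau)$-Leibniz rule. Since $G$ is abelian and $R$ is commutative, $RG$ is commutative, and Lemma \ref{useful lemma}(3) endows $\mathfrak{D}_{(\sigma,\tau)}(RG)$ with a natural $RG$-module structure; hence $a\Delta$ is a $(\sigma,\tau)$-derivation for every $a\in RG$. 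The aim becomes to show that an arbitrary $R$-linear $(\sigma,\tau)$-derivation $\delta$ has the form $a\Delta$ with $a = \Delta(b)^{-1}\delta(b)$.

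The key step is the exchange identity
\[
\delta(x)\Delta(y) = \delta(y)\Delta(x), \qquad x, y \in RG.
\]
This I would establish by applying the twisted Leibniz rule to both $\delta(xy)$ and $\delta(yx)$ and exploiting $xy = yx$: the equality
\[
\delta(x)\tau(y) + \sigma(x)\delta(y) = \delta(y)\tau(x) + \sigma(y)\delta(x)
\]
rearranges, using commutativity of $RG$, to $\delta(x)\bigl(\tau(y)-\sigma(y)\bigr) = \delta(y)\bigl(\tau(x)-\sigma(x)\bigr)$, which is the claimed identity.

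Finally, specialising $y = b$ and using that $\Delta(b) = \tau(b) - \sigma(b)$ is a unit in $RG$ by hypothesis, I can invert and obtain
\[
\delta(x) = \bigl(\tau(b)-\sigma(b)\bigr)^{-1}\delta(b)\bigl(\tau(x)-\sigma(x)\bigr)
\]
for every $x \in RG$, which is precisely the stated formula. I do not expect any real obstacle: the substantive content is the commutativity-driven exchange identity, after which invertibility of $\Delta(b)$ closes the argument in one line. The torsion hypothesis on $R$ does not appear to enter this proof directly; it is presumably retained so that the theorem is stated under hypotheses compatible with the earlier results of \cite{Chau-19} and with Theorem \ref{ufd thm}, which describes the UFD case along similar lines.
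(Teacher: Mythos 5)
Your proposal is correct and follows essentially the same route as the paper: the central identity $\delta(a)\bigl(\tau(b)-\sigma(b)\bigr)=\delta(b)\bigl(\tau(a)-\sigma(a)\bigr)$ is exactly what the paper obtains by applying $\delta$ to $ab-ba=0$ and using commutativity, after which invertibility of $\tau(b)-\sigma(b)$ finishes the argument. Your side remarks (that $\tau-\sigma$ is itself a $(\sigma,\tau)$-derivation, and that the torsion hypothesis is not needed for this particular proof) are accurate but not required.
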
 
\begin{proof}
For any $a\in RG$, as $RG$ is abelian, we have $ab-ba=0$, where $b\in RG$ is such that $\tau(b)-\sigma(b)\in\mathcal{U}(RG).$ So,
\begin{eqnarray*}
	0&=&\delta(ab-ba)=\delta(a)\tau(b)+\sigma(a)\delta(b)-\delta(b)\tau(a)-\sigma(b)\delta(a)\\
	&=&\delta(a)\big(\tau(b)-\sigma(b)\big))+\delta(b)\big(\sigma(a)-\tau(a)\big)
	\end{eqnarray*}
Thus we have for any $a\in RG$ $$\delta(a)=\big(\tau(b)-\sigma(b)\big)^{-1}\delta(b)(\tau-\sigma)(a).$$
\end{proof}

\begin{rem}If $K$ is a field and $G$ a torsion free abelian group, we can think of $KG$ as the ring $K[x_1^{\pm1},x_2^{\pm1},\cdots]$ which is a UFD. Then by Theorem \ref{ufd thm} we can conclude that the module of all $(\sigma,\tau)$-derivations of $KG$ for two different endomorphisms $\sigma$ and $\tau$ of $KG$, will be free of rank one as a $KG$-module generated by $\delta=\frac{(\tau-\sigma)}{g}$ where $g=gcd(\sigma-\tau)(KG).$
\end{rem} 
 
 Now we give a counter example to show that Theorem \ref{previous} and thus Theorem \ref{main} does not hold in general.
 We give an example of an infinite group $G$ which is both locally finite and nilpotent and such that for every field $F$, there exists an $F$-linear $\sigma$-derivation of $FG$ which is not $\sigma$-inner.

 \begin{ex}
 
 Let $H$ be a finite non-abelian nilpotent group and ${\sigma_1}$ be an automorphism of $H$ that fixes each conjugacy class. Let $G=\Pi_{i}H_i$ be the direct product of infinitely many copies of $H$ and we call the automorphism induced by ${\sigma_1}$ to $G$ as $\sigma$, that is, $\sigma=\Pi_{i}{\sigma_1}_i$. Now, $G$ is a locally nilpotent group and can be written as $G=\bigcup_{n}G_n$ where $G_n=\Pi_{i=1}^{n}H_i.$ Consider any field $F$ and the group algebra $FG$. Let the $F$-linear extension of $\sigma$ to $FG$ be also denoted by $\sigma.$ Not that $\sigma$ will be central in $FG$. Let $\tau$ be the identity homomorphism of $FG$. Our aim is to find an $F$-linear $\sigma$-derivation of $FG$ that is not $\sigma$-inner on $FG$.\\
  Let $x_i$ be any non-central element of $H_i$ and define $\delta: FG\longrightarrow FG$ by $$\delta(\alpha)=\sum_{i=1}^{\infty}x_i\alpha-\sigma(\alpha)x_i.$$ Notice that, if $\alpha\in FG_n$, then all summands after the $n$-th term are zero and hence $\delta(\alpha)\in FG_n.$  Now $\delta$ is a $\sigma$-derivation of $FG$ with $\delta$ restricted to $FG_n$ being the $\sigma$-inner derivation induced by $x_1+x_2+\cdots+x_n$. Now, $\delta(H_i)\neq 0$ for all indices $i$. Thus there exists no element $x$ with finite support such that $\delta(\alpha)=x\alpha-\sigma(\alpha)x$. That is, there does not exist any element $x$ in the group algebra $FG$ such that $\delta$ is $\sigma$-inner induced by $x$. Thus $\delta$ is a $F$-linear $\sigma$-derivation of $FG$, but it is not $\sigma$-inner.

\end{ex}\label{Counter Example}



\bibliographystyle{alpha}
\bibliography{OneBibToRuleThemAll}

\end{document}